\newcommand{\NPHI}{\ncal_{\phi_{\lambda}}}
\newcommand{\szego}{Szeg\"o\ }
\newcommand{\R}{{\mathbb R}}
\newcommand{\C}{{\mathbb C}}
\newcommand{\dbar}{\bar\partial}
\newcommand{\ddbar}{\partial\dbar}
\newcommand{\half}{{\frac{1}{2}}}
\renewcommand{\phi}{\varphi}
\newcommand{\acal}{\mathcal{A}}
\newcommand{\gcal}{\mathcal{G}}
\newcommand{\hcal}{\mathcal{H}}
\newcommand{\ical}{\mathcal{I}}
\newcommand{\lcal}{\mathcal{L}}
\newcommand{\ncal}{\mathcal{N}}
\newcommand{\ocal}{\mathcal{O}}
\newcommand{\scal}{\mathcal{S}}
\newcommand{\la}{\lambda}
\newtheorem{theo}{{\sc Theorem}}
\newtheorem{cor}[theo]{{\sc Corollary}}
\newtheorem{lem}[theo]{{\sc Lemma}}
\newtheorem{prop}[theo]{{\sc Proposition}}
\newenvironment{rem}{\medskip\noindent{\it Remark:\/} }{\medskip}
\title[ Analytic Steklov eigenfunctions]{Measure of nodal sets of analytic Steklov eigenfunctions}
\author{Steve Zelditch}
\address{Department of Mathematics, Northwestern  University,
Evanston, IL 60208-2370, USA} \email{
zelditch@math.northwestern.edu}
\thanks{Research  partially supported by NSF grant   DMS-1206527.}
\begin{document}
\maketitle

\begin{abstract}
 Let $(\Omega, g)$ be a real analytic Riemannian manifold with real analytic boundary $\partial \Omega$.
Let $\psi_{\lambda}$ be an  eigenfunction of the Dirichlet-to-Neumann operator $\Lambda$ of $(\Omega, g, \partial \Omega)$ of eigenvalue $\lambda$. 
Let $\ncal_{\lambda_j}$ be its nodal set. Then, there exists a constant $C > 0$ depending only on
 $(M, g, \partial \Omega)$ so that
$$\hcal^{n-2} (\ncal_{\lambda}) \leq C \lambda.$$
This proves a conjecture of F. H. Lin and K. Bellova.
\end{abstract}

This article is concerned with the  Hausdorff $\hcal^{n-2}$ (surface) measure of the nodal sets
$$\ncal_{\lambda} = \{x \in \partial \Omega: \psi_{\lambda} (x) = 0\} \subset \partial \Omega$$
of Steklov eigenfunctions of eigenvalue $\lambda$ of a domain $\Omega \subset \R^n$ in the real analytic case.
The Steklov eigenvalue problem is to find the eigenfunctions
of the   Steklov problem on a domain $\Omega$,
\begin{equation} \label{SP} \left\{ \begin{array}{l} \Delta  u(x) = 0, \;\;  x \in \Omega, \\ \\
\frac{\partial u}{\partial \nu}(x) = - \lambda u(x),  \;\; x \in \partial \Omega. \end{array} \right. \end{equation}
It is often assumed that $\Omega \subset \R^n$ is a bounded $C^2$ domain with  Euclidean
metric, but the problem may be posted on a bounded domain in any Riemannian manifold. 
The eigenvalue problem may be reduced to the boundary, and $\psi_{\lambda}$ is an eigenfunction
\begin{equation} \Lambda \psi_{\lambda} = \lambda \psi_{\lambda} \end{equation} of  the Dirichlet-to-Neumann operator $$\Lambda f = \frac{\partial u}{\partial \nu}(x)  |_{\partial \Omega}. \;\;$$
Here, $u$ is the harmonic extension of $f$,  $$\left\{ \begin{array}{l} \Delta  u(x) = 0, \;\;  x \in \Omega, \\ \\
u(x)=f(x),  \;\; x \in \partial \Omega. \end{array} \right..$$
 $\Lambda$ is self-adjoint on $L^2(\partial \Omega, d S)$  and there exists  an orthonormal
basis $\{\psi_j\}$ of eigenfunctions $$\Lambda \psi_j = \lambda_j \psi_j, \;\;\; \psi_j \in C^{\infty}(\partial \Omega), \;\;
\int_{\partial \Omega} \psi_j \psi_k d S = \delta_{jk},$$
where $d S$ is the surface measure. We  order the eigenvalues in ascending order  $0=\la_0<\la_1\le \la_2\le\cdots$,
counted with multiplicty.

In a recent article, Bellova-Lin \cite{BL} proved that 
$$\hcal^{n-2} (\ncal_{\lambda}) \leq C \lambda^{6}$$
when $\Omega\subset \R^n$ is a bounded  {\it real analytic} Euclidean domain.  They suggest   that the optimal result is
$\hcal^{n-2} (\ncal_{\lambda}) \leq C \lambda.$ The purpose of this article is to prove this upper bound for bounded  real analytic domains in general real analytic Riemannian manifolds.

\begin{theo} \label{NODALBOUND} Let $(\Omega, g)$ be a real analytic Riemannian manifold with real analytic boundary $\partial \Omega$.
Let $\psi_{\lambda}$ be an  eigenfunctions of the Dirichlet-to-Neumann operator $\Lambda$ of $(\Omega, g, \partial \Omega)$ of eigenvalue $\lambda$, and
 $\ncal_{\lambda}$ be its nodal set as above. Then, there exists a constant $C > 0$ depending only on
 $(\Omega, g, \partial \Omega)$ so that
$$\hcal^{n-2} (\ncal_{\lambda}) \leq C \lambda.$$

\end{theo}
It is not hard to find examples of  $(\Omega, g, \partial \Omega)$ and $\psi_{\lambda}$ where the upper
bound is achieved, for instance  on a
 hemisphere of a round sphere. But it is not clear that it is attained by a sequence of Steklov eigenfunctions on
every  $(\Omega, g, \partial \Omega)$, or more stringently that it is obtained by every sequence of eigenfunctions.   In the setting  of real analytic Riemannian manifolds $(M,g)$, it is proved in \cite{DF}
that there exists $ C> 0$ depending only on the metric $g$ so that  $\hcal^{n-1}(\ncal_{\lambda}) \geq C \lambda$.
Since $\dim \partial \Omega = n-1$, the   analogous lower bound  for the real analytic  Steklov problem   would 
be  $\hcal^{n-2} (\ncal_{\lambda}) \geq C \lambda$, where $C$ depends only  on  $(\Omega, g, \partial \Omega)$.
However the key existence result for $\Delta$-eigenfunctions of eigenvalue $\lambda^2$, that every ball of radius $\frac{C}{\lambda}$ contains
a zero of $\phi_{\lambda}$, does not seem to be known for the Steklov problem \eqref{SP}. We believe it is possible
to prove good lower bounds by the methods of this article, and plan to investigate lower bounds in a subsequent article.





\subsection{Outline of the proof of Theorem \ref{NODALBOUND}}

The key to proving the sharp upper bound in the generality of Theorem \ref{NODALBOUND} is to use the wave group 
\begin{equation} \label{WG} U(t ) = e^{it \Lambda} : L^2(\partial \Omega) \to L^2(\partial \Omega) \end{equation}
generated by $\Lambda$. $\Lambda$ is a positive elliptic self-adjoint pseudo-differential operator of order
one, and its wave group has been constructed as a Fourier integral operator in \cite{Hor,DG}. As in \cite{Z}
we study nodal sets by analytically continuing the Schwartz kernel of the  wave group to imaginary time $t + i \tau$ with
$ \tau > 0$, and to the complexification $(\partial \Omega)_{\C} $ of $\partial \Omega$. The analytic continuation in time and in the
first space variable defines  the Poisson wave kernel\begin{equation} \label{PWG} U_{\C}(t + i \tau, \zeta, y) = e^{i (t + i \tau) \Lambda} (\zeta, y) : L^2(\partial \Omega) \to L^2((\partial \Omega)_{\C}). \end{equation}
As discussed below,  
 $\Lambda$ is  an analytic pseudo-differential operator on $\partial \Omega$ when $(\Omega, \partial \Omega, g)$ is real analytic, and \eqref{PWG}   is a   Fourier integral operator with complex phase. (See \cite{Bou2,Sj} for background
on analytic pseudo-differential operators).

In the real analytic  case, the Steklov eigenfunctions are real analytic on $\partial \Omega$ and have complex analytic extensions to
$(\partial \Omega)_{\C}$.   We then study their complex nodal sets
\begin{equation} \label{CXN} \ncal_{\lambda}^{\C} = \{ \zeta \in (\partial \Omega)_{\C}: \psi^{\C}_{\lambda_j}(\zeta) = 0\}.
\end{equation}
To prove Theorem \ref{NODALBOUND}, we  use Crofton's formula and a multi-dimensional Jensen's formula  to give an upper bound for $\hcal^{n-2}(\ncal_{\lambda})$ in terms of the integral
geometry of $\ncal_{\lambda}^{\C}$. The integral geometric approach to the  upper bound is inspired by  the classic paper of
Donnelly-Fefferman \cite{DG} (see also \cite{Lin}). But,  instead of doubling estimates or frequency function
estimates,  we use the Poisson wave kernel
to obtain growth estimates on eigenfunctions, and then use results on pluri-subharmonic functions rather than functions
of one complex variable to relate growth of zeros to growth of eigenfunctions.  This approach was used in \cite{Z}
to prove equidistribution theorems for complex nodal sets when the geodesic flow is ergodic. The Poisson wave kernel
approach   works for Steklov eigenfunctions as well as Laplace eigenfunctions,
and in fact for eigenfunctions of any positive elliptic analytic pseudo-differential operator.

We first  use the Poisson wave group  \eqref{PWG}  to analytically continue eigenfunctions in the 
form
\begin{equation} \label{UAC} U_{\C}(i \tau) \psi_j (\zeta) = e^{- \tau \lambda_j} \psi_j^{\C} (\zeta). \end{equation}
We then use \eqref{UAC} to determine the growth properties of $\psi_j^{\C}(\zeta)$ in Grauert tubes of 
the complexification of $\partial \Omega$.  
The relevant notion of Grauert tube  is the standard Grauert tube for $\partial \Omega$ with the metric $g_{\partial \Omega}$
induced by the ambient metric $g$ on $M$. This is  because the principal
symbol of $\Lambda$ is the same as the principal symbol of  $\sqrt{\Delta_{\partial \Omega}}$.

\begin{rem} A remark on notation: In \cite{Z} we use $M$ to denote a Riemannian manifold, $M_{\epsilon}$ its Grauert tube
of radius $\epsilon$ and $\partial M_{\epsilon}$ to denote the boundary of the Grauert tube of radius $\epsilon$. Since
$\partial \Omega$ is the Riemannian manifold of interest here, we denote it by $M$,   \begin{equation} \label{MOM} (M, g): = (\partial \Omega, g_{\partial \Omega}). \end{equation}   Thus the   Grauert tube of radius
$\tau$ of $(\partial \Omega)_{\C}$ is denoted $M_{\tau}$ and its boundary
by $\partial M_{\tau}$, not to be confused with $\partial \Omega$. We also denote $m = \dim M  = n -1$. 

\end{rem}
Because $U_{\C}(i \tau) $ is a Fourier integral operator with complex phase, it can only magnify the $L^2$ norm
of $\psi_j$ by a power of $\lambda_j$. Hence the exponential  $e^{\tau \lambda_j}$ dominates the $L^2$ norm
on the boundary of the Grauert tube of radius $\tau$. We prove:
\begin{prop} \label{PW} Suppose  $(\Omega, g, \partial \Omega)$ is real analytic. Let  $\{\psi_{\lambda}\}$ be an 
eigenfunction of $\Lambda$ on $M = \partial \Omega$ of eigenvalue $\lambda$.  Then
$$ \sup_{\zeta \in M_{\tau}} |\psi^{\C}_{\lambda}(\zeta)| \leq C
  \lambda^{\frac{m+1}{2}} e^{\tau \lambda},
$$
\end{prop}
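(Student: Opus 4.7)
By identity \eqref{UAC}, the claim rearranges to the assertion that $|(U_\C(i\tau)\psi_\lambda)(\zeta)| \leq C\lambda^{(m+1)/2}$ uniformly for $\zeta \in M_\tau$. The plan is to bound this quantity by Cauchy--Schwarz applied to the Schwartz kernel of the Poisson wave group, and then to control the kernel via the analytic Fourier integral operator calculus with complex phase on the Grauert tube.

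The first step is to expand the Schwartz kernel,
\[
(U_\C(i\tau)\psi_\lambda)(\zeta) = \int_M U_\C(i\tau,\zeta,y)\,\psi_\lambda(y)\,dS(y),
\]
and use $\|\psi_\lambda\|_{L^2(M)} = 1$ together with Cauchy--Schwarz to reduce matters to
\[
|(U_\C(i\tau)\psi_\lambda)(\zeta)|^2 \;\leq\; \int_M |U_\C(i\tau,\zeta,y)|^2\,dS(y) \;=\; \sum_k e^{-2\tau\lambda_k}\,|\psi_k^\C(\zeta)|^2,
\]
the diagonal value of $U_\C(i\tau)\,U_\C(i\tau)^*$, which is a complexified Szeg\"o--Poisson type kernel on the Grauert tube $M_\tau$.

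The second step is to estimate this diagonal. Since $\Lambda$ is an elliptic self-adjoint analytic pseudodifferential operator of order one with the same principal symbol as $\sqrt{\Delta_{\partial\Omega}}$, the operator $U_\C(i\tau)$ fits into the Boutet de Monvel--Sj\"ostrand framework of analytic Fourier integral operators with complex phase on Grauert tubes, precisely as in \cite{Z} for Laplace eigenfunctions. Stationary phase applied to the oscillatory integral defining $U_\C(i\tau)\,U_\C(i\tau)^*$, or equivalently the parametrix for the analytic Szeg\"o projector on the strictly pseudoconvex CR hypersurface $\partial M_\tau$, produces a bound
\[
\sum_k e^{-2\tau\lambda_k}\,|\psi_k^\C(\zeta)|^2 \;\leq\; C(\tau)
\]
uniformly on $M_\tau$. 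Combined with the reproducing identity $\psi_\lambda^\C(\zeta) = e^{\tau\lambda}(U_\C(i\tau)\psi_\lambda)(\zeta)$, this immediately yields $|\psi_\lambda^\C(\zeta)| \leq C(\tau)^{1/2}\,e^{\tau\lambda}$, which is even stronger than the stated bound once the $\tau$-dependent constant is absorbed into the polynomial prefactor $\lambda^{(m+1)/2}$.

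\textbf{Main obstacle.} The main technical input is the identification of $U_\C(i\tau)$ with an analytic FIO with complex phase on the Grauert tube of $(M,g_{\partial\Omega})$, together with the uniform stationary-phase estimate for its kernel up to the boundary $\partial M_\tau$ where the complex phase becomes real and the naive symbol calculus degenerates. The matching of principal symbols between $\Lambda$ and $\sqrt{\Delta_{\partial\Omega}}$ is what licenses importing the Laplace-eigenfunction machinery of \cite{Z} directly to this Steklov setting; once this geometric identification is in place and the analytic FIO calculus of \cite{Bou2,Sj} is invoked, the proposition is a routine Cauchy--Schwarz argument.
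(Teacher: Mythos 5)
Your first step (the reproducing identity \eqref{UAC} plus Cauchy--Schwarz against the kernel) is fine, and it is in fact the same reduction the paper makes: bounding a single term $e^{-2\tau\lambda}|\psi_\lambda^{\C}(\zeta)|^2$ by a tempered diagonal sum. The gap is in your second step. You claim that the full diagonal series $\sum_k e^{-2\tau\lambda_k}|\psi_k^{\C}(\zeta)|^2$ is bounded by a constant $C(\tau)$ uniformly on $M_\tau$, and that this follows routinely from stationary phase in the analytic FIO calculus. This is false, and it contradicts the quantitative statement that the paper actually proves: the local Weyl law \eqref{PTAU} shows that already the truncated sums $P^{\tau}_{[0,\lambda]}(\zeta,\bar\zeta)=\sum_{\lambda_j\le\lambda}e^{-2\tau\lambda_j}|\psi_{\lambda_j}^{\C}(\zeta)|^2$ grow like $\lambda^{(m+1)/2}$ for $\zeta\in\partial M_\tau$. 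Hence the full series diverges on $\partial M_\tau$ and is not uniformly bounded on the open tube $M_\tau$ either (it blows up as $\sqrt{\rho}(\zeta)\to\tau$), so Cauchy--Schwarz at the same radius $\tau$ cannot give a bound valid for the supremum over $M_\tau$. The underlying reason is that on $\partial M_\tau$ the damping $e^{-2\tau\lambda_k}$ exactly cancels the exponential growth $e^{2\tau\lambda_k}$ of $|\psi_k^{\C}|^2$, leaving a polynomially growing density of states; the complexified Poisson--Szeg\"o kernel is singular on the diagonal of $\partial M_\tau$, not bounded. Note also that your claimed bound would immediately give $|\psi_\lambda^{\C}(\zeta)|\le C e^{\tau\lambda}$ with no polynomial factor, a statement strictly stronger than the proposition (the single term $k=\lambda$ already shows your inequality is equivalent to it), so it cannot be dismissed as a routine consequence of symbol calculus --- it is precisely the quantitative content that has to be proved, and the stationary phase computation yields growth, not boundedness.

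The paper's route repairs exactly this point: it bounds the single term by the \emph{truncated} tempered projection $P^{\tau}_{[0,\lambda]}$, smooths in frequency with a cutoff $\hat\chi$ supported near $t=0$, analytically continues the H\"ormander parametrix \eqref{PARAONE} to the tube, and applies complex stationary phase plus a cosine Tauberian theorem to obtain \eqref{PTAU}; the factor $\lambda^{(m+1)/2}$ in the proposition (indeed $\lambda^{(m+1)/4}$ in Corollary \ref{PWa}) is then read off from these asymptotics. If you want to keep your Cauchy--Schwarz-with-the-full-kernel scheme, you would have to work at a strictly larger radius $\tau'>\tau$, where the series does converge, and quantify the blow-up of its sum as $\tau'\downarrow\tau$ (then optimize, e.g. $\tau'-\tau\sim\lambda^{-1}\log\lambda$, to trade the loss $e^{(\tau'-\tau)\lambda}$ against the divergence of the constant); but that again requires the kernel asymptotics near $\partial M_\tau$, i.e. essentially the local Weyl law you were trying to avoid.
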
  
The proof follows  from  a standard cosine Tauberian result and the fact that the complexified
Poisson kernel is a complex Fourier integral operator of finite
order.  This simple growth estimate replaces the doubling estimates
of \cite{DF} and \cite{BL}. 
It is closely related to growth estimates of $\Delta$-eigenfunctions in \cite{Z,Z2,Z3}.

 For the precise   statement that $U_{\C}(t + i \tau)$ is indeed a Fourier
integral operator with complex phase, we refer to Theorem \ref{BOUFIO}. 
It is in some sense a known result for elliptic analytic pseudo-differential operators, and we therefore postpone  the detailed  proof of Theorem \ref{BOUFIO} for $\Lambda$  to a later article.

We thank Boris Hanin, Peng Zhou, Iosif Polterovich, Chris Sogge  and particularly Y. Canzani for comments/corrections on earlier versions. We also thank  G. Lebeau for confirming that
Theorem \ref{BOUFIO} should be true, with not too different a proof than in the Riemannian wave case.

\section{Geometry and analysis of Grauert tubes}

We briefly review the geometry and analysis on Grauert tubes of real analytic Riemannian manifolds. We refer to \cite{Z,Z2,GS1,GS2} for more detailed discussions.

\subsection{\label{AC} Analytic continuation  to a Grauert tube}

A real analytic manifold $M$ always possesses a complexification
$M_{\C}$, i.e. a complex manifold  of which $M$ is a totally real
submanifold. A real analytic Riemannian metric  $g$ on $M$ determines a
canonical plurisubharmonic function $\rho_g$ on $M_{\C}$; since the metric is fixed througout, we denote
it simply by $\rho$.  Its
square-root $\sqrt{\rho}$ is known as the Grauert tube function; it
 equals
$\sqrt{- r^2_{\C}(z, \bar{z})}/2$ where $r_{\C}$ is the holomorphic extension
of the distance function. The $(1,1)$ form 
$\omega = \omega_{\rho}: = i \ddbar \rho$ defines  a K\"ahler metric on $M_{\C}$.    The
Grauert tubes $M_{\epsilon}: = \{ z \in M_{\C}: \sqrt{\rho}(z) < \epsilon\} $
are strictly pseudo-convex domains in $M_{\C}$, whose boundaries are strictly
pseudo-convex CR manifolds. We also denote the contact form of $\partial M_{\tau}$ by
\begin{equation} \label{alpha} \alpha = \frac{1}{i} \partial \rho|_{\partial M_{\tau} } = d^c \sqrt{\rho}.\end{equation}

The complexified exponential map 
\begin{equation} \label{E} (x, \xi) \in B_{\epsilon}^*M \to E(x, \xi): = \exp_x^{\C} \sqrt{-1} \xi \in M_{\epsilon} \end{equation}
defines a 
symplectic diffeomorphism, where $B^*_{\epsilon} M
\subset T^*M$ is the co-ball bundle of radius $\epsilon$, equipped with   the standard symplectic structure,
and where $M_{\epsilon}$ is equipped with  $\omega_{\rho}$. The Grauert tube function $\sqrt{\rho}$ pulls back under $E$ to the metric norm function $|\xi|_g$. 
We emphase the setting $M_{\C}$ but it is equivalent to using $E$ to endow $B^*_{\epsilon} M$ with an adapted
complex structure. We refer to  \cite{GS1, GS2, LS, GLS} for further discussion.

\subsection{Geodesic and Hamiltonian flows}

The microlocal analysis of the kernels \eqref{PWG} 
involves the  complexification of the geodesic flow.
We denote by $g^t$ 
the  (real) homogeneous geodesic flow of  $(M, g)$. It is   the real analytic
Hamiltonian flow on $T^*M \backslash 0_M$ generated by the Hamiltonian $|\xi|_g$
with respect to the standard symplectic form $\omega$. We also consider
 the Hamiltonian flow of
$|\xi|_g^2$, which is real analytic on all of $T^*M$ and  denote
its Hamiltonian flow by $G^t$. In general, we denote by $\Xi_H$
the Hamiltonian vector field of a Hamiltonian $H$ and its flow by
$\exp t \Xi_H$.  Thus, we consider the  Hamiltonian flows
\begin{equation} \label{gtdef} 
g^t = \exp t \Xi_{|\xi|_g}, \;\;\; \mbox{resp.}\;\;\;
G^t = \exp t \Xi_{|\xi|_g^2}. \end{equation}
 The
exponential map is the map $\exp_x: T^*M \to M$ defined by $\exp_x
\xi = \pi G^t(x, \xi)$ where $\pi$ is the standard projection.
Since  $E^* \sqrt{\rho} = |\xi|, $  $E^*$ conjugates the geodesic flow
on $B^*M$ to the Hamiltonian flow $\exp t \Xi_{\sqrt{\rho}}$ of
$\sqrt{\rho}$ with respect to $\omega$, i.e.
\begin{equation} \label{gt} E(g^t(x, \xi)) = \exp t \Xi_{\sqrt{\rho}} (\exp_x i \xi).  \end{equation}

\subsection{Szeg\"o kernel and analytic continuation of the Poisson kernel}

  We denote by $\ocal^{s +
\frac{m-1}{4}}(\partial M _{\tau})$ the Sobolev spaces of CR
holomorphic functions on the boundaries of the strictly
pseudo-convex domains $M_{\tau}$, i.e.
\begin{equation} \label{SOBSP} {\mathcal O}^{s +
\frac{m-1}{4}}(\partial M_{\tau}) = W^{s + \frac{m-1}{4}}(\partial
M_{\tau}) \cap \ocal (\partial M_{\tau}), \end{equation}  where
$W^s$ is the $s$th Sobolev space and where $ \ocal (\partial
M_{\tau})$ is the space of boundary values of holomorphic
functions. The inner product on $\ocal^0 (\partial M _{\tau} )$ is
with respect to the Liouville measure or contact volume form

\begin{equation}  \label{CONTACTVOL} d\mu_{\tau} : = \alpha \wedge \omega^{m-1}, \end{equation}
on $\partial M_{\tau}$.


The study of norms of complexified eigenfunctions is 
related to the study of the \szego kernels $\Pi_{\tau}$ of
$M_{\tau}$, namely the orthogonal projections

\begin{equation} \Pi_{\tau}: L^2(\partial M_{\tau}, d\mu_{\tau}) \to \ocal^0(\partial M_{\tau},
d\mu_{\tau}) \end{equation}  onto the Hardy space of boundary
values of holomorphic functions in $M_{\tau}$ which belong to $
L^2(\partial M_{\tau}, d\mu_{\tau})$. The \szego projector
$\Pi_{\tau}$ is a complex Fourier integral operator with a
positive complex canonical relation. The
real points of its canonical relation form the graph
$\Delta_{\Sigma}$ of the identity map on the symplectic one
 $\Sigma_{\tau}
\subset T^*
\partial M_{\tau}$ defined by the spray \begin{equation} \label{SIGMATAU} \Sigma_{\tau} =
\{(\zeta, r d^c \sqrt{\rho}(\zeta)): r \in \R_+\} \subset T^*
(\partial M_{\tau})
\end{equation}  of the contact form $d^c \sqrt{\rho}$. There exists a symplectic equivalence  (cf.
\cite{GS2})
\begin{equation} \iota_{\tau} : T^*M - 0 \to
\Sigma_{\tau},\;\; \iota_{\tau} (x, \xi) = (E(x, \tau
\frac{\xi}{|\xi|}), |\xi|d^c \sqrt{\rho}_{E(x, \tau
\frac{\xi}{|\xi|})} ).
\end{equation}

\subsection{Analytic continuation of the Poisson wave kernel}

The wave group generated by $\Lambda$ on  $M = \partial \Omega$  is the unitary group $U(t) = e^{ i
 t \Lambda}$. Its kernel $U(t, x, y)$ solves the  `half-wave equation',
\begin{equation} \label{HALFWE} \left(\frac{1}{i} \frac{\partial }{\partial t} -\Lambda_x \right) U(t, x, y) = 0, \;\; U(0, x, y) =
\delta_y(x). \end{equation}  Here, $\Lambda_x$ means that $\Lambda$ is applied in the $x$ variable.  In the real domain it is   well known \cite{Hor,DG} that
$U(t, x, y)$ is the Schwartz kernel of a Fourier integral
operator,
$$U(t, x, y) \in I^{-1/4}(\R \times M \times M, \Gamma)$$
with underlying canonical relation $$\Gamma = \{(t, \tau, x, \xi,
y, \eta): \tau + |\xi| = 0, g^t(x, \xi) = (y, \eta) \} \subset T^*
\R \times T^*M \times T^*M. $$

The Poisson-wave kernel is the analytic continuation $U(t + i \tau, x, y)$  of the wave kernel  with respect to time,  $ t \to t + i \tau\in \R \times \R_+$. For $t = 0$ and  for $\tau > 0$, we
obtain
the  Poisson semi-group
$U(i \tau) = e^{- \tau \Lambda}$. For general $t + i \tau$,  the Poisson-wave kernel   has the eigenfunction expansion,
\begin{equation}\label{POISEIGEXP}  U ( i
\tau, x, y) = \sum_j e^{i (t + i \tau) \lambda_j} \psi_{\lambda_j}(x)
\psi_{\lambda_j}(y).
\end{equation}

As stated in Theorem \ref{BOUFIO} in the introduction, the  Poisson-wave  kernel $U(t + i \tau, x, y)$  admits an analytic
continuation $U_{\C}(t + i \tau, \zeta, y)$ in the first variable
to  $M_{\tau} \times M$.

\begin{theo}\label{BOUFIO}  Let $U(t)$ be the wave group of the Dirichlet to Neumann operator $\Lambda$ on
$M = \partial \Omega$ as above. Then $\Pi_{\epsilon} \circ U (i \epsilon): L^2(M)
\to \ocal(\partial M_{\epsilon})$ is a  complex Fourier integral
operator of order $- \frac{m-1}{4}$  associated to the canonical
relation
$$\Gamma = \{(y, \eta, \iota_{\epsilon} (y, \eta) \} \subset T^* \partial M_{\epsilon} \times \Sigma_{\epsilon}.$$
Moreover, for any $s$,
$$\Pi_{\epsilon} \circ U (i \epsilon): W^s(M) \to {\mathcal O}^{s +
\frac{m-1}{4}}(\partial M_{\epsilon})$$ is a continuous
isomorphism.
\end{theo}  
This statement is asserted by Boutet de Monvel in \cite{Bou, Bou2} for any real analytic positive  elliptic pseudo-differential operator,
and has been accepted since then as an established fact (see for instance \cite{GS1,GS2}).  The proof was only  sketched in \cite{Bou,Bou2},
 and the first  complete proofs appeared only recently in
\cite{Z2,L,St} for the special case of the wave group of a Riemannian manifold without boundary. 
Roughly the same proof  applies to the Steklov problem as well because $\sqrt{\Delta_{\partial M}}$
and $\Lambda$ are the same to leading order and in fact differ by an analytic pseudo-differential operator
of order zero.   This is because  the  principal symbol of $\Lambda,$
\begin{equation} \label{ps} \sigma_{\Lambda} : T^* \partial \Omega \to \R, \;\;\; \sigma_{\Lambda} (x, \xi) = |\xi|_{g_{\partial}},
\end{equation}
is the same as for the Laplacian $\Delta_{\partial}$ of the boundary $(\partial \Omega, g_{\partial})$. In fact, the 
complete symbol of $\Lambda$ is calculated in \cite{LU} (see also \cite{PS}).  It would be desirable to have
a detailed exposition of the proof, but we postpone that to a future article.

\section{Growth of complexified eigenfunctions proof of Proposition \ref{PW}}

We further need to generalize sup norm estimates
of complexified eigenfunctions in  \cite{Z2} to the $\Lambda$-eigenfunctions.

 As in \cite{Z2,Z3} we prove Proposition \ref{PW} by introducing the 
`tempered'  spectral
projections
\begin{equation}\label{TCXSPM}   P_{ I_{\lambda}}^{\tau}(\zeta, \bar{\zeta}) =
\sum_{j: \lambda_j \in  I_{\lambda}} e^{-2 \tau \lambda_j}
|\psi_{\lambda_j}^{\C}(\zeta)|^2, \;\; (\sqrt{\rho}(\zeta) \leq
\tau),
\end{equation}
where $I_{\lambda} $ could be a short interval  $[\lambda, \lambda
+ 1]$ of frequencies or a long window $[0, \lambda]$.
Exactly as in \cite{Z2} but with the wave group of $\Lambda$ replacing the wave group of $\sqrt{\Delta}$, we prove
\begin{equation} \label{PTAU}  P^{\tau}_{[0, \lambda]}(\zeta, \bar{\zeta}) =  (2\pi)^{-m} \left(\frac{\lambda}{\sqrt{\rho}} \right)^{\frac{m-1}{2}}
  \left( \frac{\lambda}{(m-1)/2 + 1} +  O (1) \right), \;\; \zeta \in \partial M_{\tau}. \end{equation}
We then obtain

\begin{cor} \label{PWa}
Let $\psi_{\lambda}$ be an eigenfunction of $\Lambda$ as above. Then 
there exists $C > 0$ so that for all $\sqrt{\rho}(\zeta) = \tau$,
$$ C
\lambda_j^{-\frac{m-1}{2}} e^{ \tau \lambda} \leq \sup_{\zeta \in
M_{\tau}} |\psi^{\C}_{\lambda}(\zeta)| \leq C
  \lambda^{\frac{m-1}{4} + \half} e^{\tau \lambda}.
$$

\end{cor}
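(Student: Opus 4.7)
The plan is to extract both inequalities from the on-diagonal asymptotic \eqref{PTAU} for the tempered spectral projection $P^\tau_{[0,\lambda]}(\zeta,\bar\zeta)$, combined with the Fourier integral structure of $\Pi_\tau \circ U(i\tau)$ provided by Theorem \ref{BOUFIO}. The upper bound will drop out of the pointwise positivity of the sum defining $P^\tau_{[0,\lambda]}$, while the lower bound will use the FIO mapping isomorphism together with a Sobolev-to-$L^2$ comparison for CR-holomorphic eigenfunctions.

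For the upper bound, since every summand in
\[
P^\tau_{[0,\lambda]}(\zeta,\bar\zeta)=\sum_{\lambda_j\leq\lambda}e^{-2\tau\lambda_j}|\psi^\C_{\lambda_j}(\zeta)|^2
\]
is nonnegative, I would retain only the term indexed by our distinguished eigenvalue $\lambda$ to obtain $e^{-2\tau\lambda}|\psi^\C_\lambda(\zeta)|^2 \leq P^\tau_{[0,\lambda]}(\zeta,\bar\zeta) = O(\lambda^{(m+1)/2})$ uniformly for $\sqrt\rho(\zeta)=\tau$ by \eqref{PTAU}. Taking square roots and then the supremum over $\zeta \in \partial M_\tau$ produces $\sup_{\zeta\in\partial M_\tau} |\psi^\C_\lambda(\zeta)| \leq C\lambda^{(m+1)/4}e^{\tau\lambda}=C\lambda^{(m-1)/4+1/2}e^{\tau\lambda}$, which is precisely the stated upper bound. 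Plurisubharmonicity of $\log|\psi^\C_\lambda|$ then extends this estimate from the boundary $\partial M_\tau$ to the full tube $M_\tau$.

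For the lower bound, I would invoke Theorem \ref{BOUFIO}: the operator $\Pi_\tau \circ U(i\tau)$ is a continuous isomorphism $L^2(M)\to\ocal^{(m-1)/4}(\partial M_\tau)$. Applied to an $L^2$-normalized eigenfunction $\psi_\lambda$, this yields $\|e^{-\tau\lambda}\psi^\C_\lambda\|_{W^{(m-1)/4}(\partial M_\tau)} \geq c > 0$ with $c$ independent of $\lambda$. Since $\psi^\C_\lambda$ corresponds via the symplectic equivalence $\iota_\tau$ to a Szeg\"o--Toeplitz eigenfunction of frequency $\lambda$, its $W^s$-norm on $\partial M_\tau$ is controlled by $\lambda^s$ times its $L^2$-norm, converting the Sobolev lower bound into a lower bound on $\|\psi^\C_\lambda\|_{L^2(\partial M_\tau)}$ of the form $e^{\tau\lambda}$ times a negative power of $\lambda$. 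The pointwise inequality $\sup |\psi^\C_\lambda| \geq \|\psi^\C_\lambda\|_{L^2(\partial M_\tau)}/|\partial M_\tau|^{1/2}$ then delivers at least the claimed lower bound $C\lambda^{-(m-1)/2}e^{\tau\lambda}$.

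The main obstacle is the Sobolev-to-$L^2$ bookkeeping for CR-holomorphic eigenfunctions: Theorem \ref{BOUFIO} is formulated as a Sobolev-space isomorphism, but the Corollary demands a pointwise statement, so one must carefully track how the Szeg\"o--Toeplitz frequency of $\psi^\C_\lambda$ (which equals $\lambda$ through $\iota_\tau$) translates into Sobolev derivative order on the CR manifold $\partial M_\tau$. The upper bound, by contrast, is essentially immediate once \eqref{PTAU} is in hand, requiring only nonnegativity of individual summands.
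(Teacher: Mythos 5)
Your proposal is correct and follows essentially the paper's own route: the upper bound by dropping all but the $\lambda_j=\lambda$ term in the tempered projection \eqref{TCXSPM} and applying the local Weyl law \eqref{PTAU} (with the maximum principle to pass from $\partial M_\tau$ to $M_\tau$), and the lower bound from the isomorphism property in Theorem \ref{BOUFIO} together with the Sobolev/$L^2$ comparison for the frequency-$\lambda$ CR function, exactly the argument the paper imports from \cite{Z2}. The Sobolev bookkeeping you flag is indeed the only nontrivial point, and it is handled there just as you indicate via the microlocal concentration on $\Sigma_\tau$.
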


The lower bound is not used in the nodal analysis.

\subsection{Proof of the local Weyl law}


We only sketch the proof for the sake of completeness, since it is essentially the same as in \cite{Z,Z2,Z3} and 
closely follows \cite{DG}. The novelty is that we apply the argument of \cite{DG} to the analytically continued
parametrix.

By \cite{Hor, DG} the positive elliptic first order pseudo-differential operator  $\Lambda$ generates 
a wave group which has a parametrix of the form,
\begin{equation} \label{PARAONE} U(t, x, y) = \int_{T^*_y M} e^{ i
t |\xi|_{g_y} } e^{i \langle \xi, \exp_y^{-1} (x) \rangle} A(t, x,
y, \xi) d\xi
\end{equation} 
similar to that of the wave kernel of $M = \partial \Omega$,
since $\Lambda = \sqrt{\Delta_M} + Q$ where $Q $ is an analytic pseudo-differential operator of order zero. 
Here,
 $|\xi|_{g_x} $ is the metric norm function at
$x$, and where $A(t, x, y, \xi)$ is a polyhomogeneous amplitude of
order $0$ which is supported near the diagonal. The amplitude is different from that of the wave kernel since
the transport equations involve $Q$. 

By Theorem \ref{BOUFIO}, the wave group and parametrix may be analytically continued. 
To obtain uniform asymptotics,  we use the analytic continuation of the
H\"ormander 
parametrix (\ref{PARAONE}). We choose local coordinates near $x$
and write $\exp_x^{-1}(y) = \Phi(x, y)$ in these local coordinates
for $y$ near $x$, and write the integral $T^*_yM$ as an integral
over $\R^m$ in these coordinates. The holomorphic extension of the
parametrix  to the Grauert tube $|\zeta| < \tau$ at time $t + 2 i
\tau$ has the form
\begin{equation} \label{CXPARAONE} U_{\C}(t + 2 i \tau,
\zeta, \bar{\zeta}) = \int_{\R^m} e^{(i t - 2\tau )  |\xi|_{g_y} }
e^{i \langle \xi, \Phi (\zeta, \bar{\zeta}) \rangle} A(t, \zeta,
\bar{\zeta}, \xi) d\xi,
\end{equation}
where $A$ is the  analytic extensions of the real analytic $A$ and $\Phi(\zeta, \bar{\zeta})$ is the 
analytic extension of $\exp_y^{-1} (x)$.

 We 
introduce  a cutoff function $\chi \in \scal(\R)$ with $\hat{\chi}
\in C_0^{\infty}$ supported in sufficiently small neighborhood of
$0$ so that no other singularities of $U_{\C}(t + 2 i \tau, \zeta,
\bar{\zeta})$ lie in its support. We also assume $\hat{\chi}
\equiv 1$ in a smaller neighborhood of $0$.  We then change
variables $\xi \to \lambda \xi$ and  apply the complex
stationary phase to the integral,
\begin{equation} \label{CXPARAONEc}\begin{array}{l}  \int_{\R} \hat{\chi}(t) e^{-i \lambda t} U_{\C} (t + 2 i \tau,
\zeta, \bar{\zeta})dt \\ \\
= \lambda^m \int_{0}^{\infty} \int_{\R} \hat{\chi}(t) e^{-i
\lambda t} \int_{S^{m-1}} e^{(i t - 2\tau ) \lambda r} e^{i r
\lambda \langle \omega, \Phi (\zeta, \bar{\zeta}) \rangle} A(t,
\zeta, \bar{\zeta}, \lambda r \omega ) r^{m-1} dr dt
d\omega.\end{array}
\end{equation}
The resulting integral \eqref{CXPARAONEc} is a semi-classical Fourier integral distribution of with a complex phase,
the same phase  as in the pure Riemannian case treated in \cite{Z2}. Hence the stationary phase calculation 
is essentially the same as in    section 9.1
of \cite{Z2}.  We first integrate over $d \omega$ and find that there are two stationary
phase points, one giving an exponentially decaying amplitude of order $e^{- 2 \lambda \tau r}$and one for which the critical value is $2 \lambda  \tau r$. It cancels the term $- 2 \tau \lambda r$ coming from  the factor $e^{(i t - 2\tau ) \lambda r} $. 
We then apply stationary phase to the resulting 
integral over $(t, r)$
with phase $ t (r - 1)$. The critical set consists of $r = 1, t = 0$. The
phase is clearly  non-degenerate with Hessian determinant one and inverse
Hessian operator $D^2_{\theta, t}$. Taking into account the factor of $\lambda^{-1}$ from the change of variables,
the stationary phase expansion gives
\begin{equation}\label{EXPANSIONCaa}  \sum_j \psi(\lambda - \lambda_j) e^{- 2 \tau
\lambda_j} |\psi_j^{\C}(\zeta)|^2 \sim \sum_{k = 0}^{\infty}\lambda^{\frac{m-1}{2} - k} \omega_k(\tau; \zeta),
\end{equation}
where the coefficients $\omega_k(\tau, \zeta)$ are smooth for $\zeta \in \partial M_{\tau}$.
The Weyl asymptotics  then follows from the standard cosine Tauberian theorem, as in \cite{DG} or \cite{Z2} (loc. cit.).



\section{Proof of Theorm \ref{NODALBOUND}}

We start with the integral geometric approach of   \cite{DF} (Lemma 6.3) (see also  \cite{Lin}  (3.21)). There exists a ``Crofton formula" in the  real domain which
bounds  the local   nodal
hypersurface volume above,
\begin{equation} \label{INTGEOM} \hcal^{m-1}(\NPHI \cap U)  \leq C_L \int_{\lcal} \#\{ \NPHI\cap \ell\}
d\mu(\ell). \end{equation}
 Thus, $ \hcal^{m-1}(\NPHI \cap U)  $ is bounded above
by a constant $C_L$ times the average  over all line segments of length $L$  in a local coordinate patch $U$  of the number of intersection points
of the line with the nodal hypersurface. The measure $d\mu_L$ is known as the `kinematic measure' in the Euclidean
setting \cite{F} (Chapter 3); see also Theorem 5.5 of \cite{AP}.  We will be using geodesic
segments of fixed length $L$ rather than line segments,  and parametrize them by $S^*M \times [0, L]$, i.e. by
their intial data and time.  Then
$d\mu_{\ell}$ is essentially Liouville measure $d\mu_L$ on $S^* M$ times $dt$.

The complexification of a
real line $\ell = x + \R v$ with $x, v \in \R^m$ is $\ell_{\C} = x +
\C v$. Since the number of intersection points (or zeros) only increases if we count complex intersections, we have
\begin{equation} \label{INEQ1} \int_{\lcal} \# (\NPHI \cap \ell)
d\mu(\ell) \leq \int_{\lcal} \# (\NPHI^{\C} \cap \ell_{\C})
d\mu(\ell).
 \end{equation} 
Note that this complexification is quite different from using intersections with all complex lines to measure
complex nodal volumes. If we did that, we would obtain a similar  upper bound on the complex hypersurface volume
of the complex nodal set. But it would not give an upper bound on the real nodal volume and indeed would
the complex volume tends to zero as one shrinks the Grauert tube radius to zero, while \eqref{INEQ1} stays
bounded below. 

Hence to prove Theorem \ref{NODALBOUND} it suffices to show
\begin{lem} \label{DF2} We have,
$$\hcal^{m-1}(\NPHI) \leq C_L \int_{\lcal} \# (\NPHI)^{\C} \cap \ell_{\C} )
d\mu(\ell) \leq C \lambda. $$

\end{lem}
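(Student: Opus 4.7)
The first inequality was already reduced to the real-Crofton bound \eqref{INTGEOM} combined with the complexification bound \eqref{INEQ1}, so the remaining task is to prove
\begin{equation*}
\int_{\lcal} \#(\NPHI^{\C}\cap \ell_{\C})\, d\mu(\ell) \leq C\lambda.
\end{equation*}
My plan is to restrict $\psi_{\lambda}^{\C}$ to each complexified geodesic, apply the classical one-variable Jensen formula on the resulting holomorphic disc, and then integrate over the family of discs against $d\mu$.

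First I would parametrize $\lcal$ by $(x,v,t_0)\in S^*M\times[-L/2,L/2]$, so that $d\mu(\ell)=dt_0\,d\mu_L(x,v)$. The complexified exponential map \eqref{E} sends the rectangle $\{w\in\C:|\operatorname{Re}(w)-t_0|<L/2,\ |\operatorname{Im}(w)|<\tau\}$ biholomorphically into the Grauert tube $M_\tau$, so
\begin{equation*}
f_\ell(w):=\psi_{\lambda}^{\C}\bigl(\exp_x^{\C}(wv)\bigr)
\end{equation*}
is holomorphic on that rectangle. A sub-disc of radius $R<\tau$ centered at $t_0$ fits inside, and Jensen's inequality for $0<r<R$ gives
\begin{equation*}
n_{\ell}(r)\,\log(R/r) \,\leq\, \frac{1}{2\pi}\int_0^{2\pi}\log|f_\ell(t_0+Re^{i\theta})|\,d\theta \,-\, \log|f_\ell(t_0)|,
\end{equation*}
where $n_{\ell}(r)$ counts zeros of $f_\ell$ in $|w-t_0|<r$.

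Next I would integrate the Jensen bound over $d\mu(\ell)$. By Proposition \ref{PW}, $\log|f_\ell(w)|\leq \tau\lambda+\tfrac{m+1}{2}\log\lambda+O(1)$ uniformly in $w$ in the rectangle, so the boundary-average term contributes at most $C\lambda$ after integration. By Fubini, the $-\log|f_\ell(t_0)|$ term integrates to $-C\int_M \log|\psi_\lambda(y)|\,dV(y)$, because $(x,v,t_0)\mapsto \gamma_{x,v}(t_0)$ is a surjective submersion onto $M$ with Jacobian bounded above and below.

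The main obstacle will be establishing the complementary lower bound
\begin{equation*}
\int_M \log|\psi_\lambda(y)|\,dV(y) \,\geq\, -C\lambda,
\end{equation*}
i.e.\ preventing $\psi_\lambda$ from being too small on too large a set. I would exploit that $u_\lambda:=\tfrac{1}{\lambda}\log|\psi_\lambda^{\C}|$ is plurisubharmonic on $M_\tau$ with the uniform upper bound $u_\lambda\leq \tau+o(1)$ coming from Proposition \ref{PW}, while the matching lower bound $\sup_{M_\tau}u_\lambda\geq \tau-o(1)$ coming from Corollary \ref{PWa} rules out uniform collapse. Standard $L^1_{\mathrm{loc}}$-compactness for normalized plurisubharmonic families then forces $\{u_\lambda\}$ to be bounded in $L^1(M_\tau)$; restricting to the totally real submanifold $M$ via the sub-mean-value property produces $\int_M u_\lambda\,dV\geq -C$, which is the required estimate. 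Assembling the three inequalities above then yields Lemma \ref{DF2}.
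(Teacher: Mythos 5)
Your overall strategy (Crofton reduction, complexification, growth bound from Proposition \ref{PW} for the $\log_+$ part, pluripotential compactness for the small-values part) is the same in spirit as the paper's, but your specific implementation of Jensen's formula creates a genuine gap at the last step. Because you use the classical pointwise Jensen formula on discs centered at \emph{real} points $t_0$, the negative term you must control is $-\int_M \log|\psi_\lambda|\,dV$, an integral over the totally real manifold $M$ itself. Your proposed justification --- $L^1(M_\tau)$-boundedness of $u_\lambda=\tfrac1\lambda\log|\psi_\lambda^{\C}|$ plus ``restriction to $M$ via the sub-mean-value property'' --- does not work: the sub-mean-value inequality bounds a plurisubharmonic function at a point from \emph{above} by its ball average, so it can never produce the lower bound you need at real points; and since $M$ has measure zero in $M_\tau$, an $L^1(M_\tau)$ bound says nothing about the restriction to $M$ (e.g.\ $\log|\operatorname{Im}w|$ is subharmonic, locally $L^1$, yet identically $-\infty$ on the real axis). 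Thus the key estimate $\int_M\log|\psi_\lambda|\,dV\ge -C\lambda$ is left unproved, and proving it directly on the real domain is essentially the kind of doubling/smallness control (Donnelly--Fefferman, Bellova--Lin) that this paper is designed to avoid.

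The paper sidesteps exactly this issue by formulating Jensen differently: it integrates the current $dd^c\log|\psi_\lambda^{\C}|^2$ on each holomorphic strip against a cutoff $\chi_\epsilon$ equal to $1$ on $S_{\epsilon,L}$, and integrates by parts so that $\log|\psi_\lambda^{\C}|^2$ is paired with $dd^c\chi_\epsilon$; after pushing forward under the complexified exponential map, the problematic term becomes $\int_{M_\tau}\log|\psi_\lambda^{\C}|^2\,J\,dV$, an integral over the \emph{full-dimensional} Grauert tube. There Hartogs' Lemma (Lemma \ref{HARTOGS}) applies: the normalized family $\tfrac1\lambda\log|\psi_\lambda^{\C}|$ is uniformly bounded above by Proposition \ref{PW}, cannot tend uniformly to $-\infty$ because $\|\psi_\lambda\|_{L^2(M)}=1$, hence is precompact in $L^1_{loc}(M_\tau)$, which yields $\tfrac1\lambda\int_{M_\tau}\log|\psi_\lambda^{\C}|\,J\,dV\ge -C$. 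If you want to salvage your disc-by-disc Jensen argument, you must either move the centers of the discs into the interior of the tube (so the centered term is again controlled by complex-domain compactness) or replace the pointwise Jensen formula by the distributional $dd^c$/cutoff version as in the paper; as written, the passage from $L^1(M_\tau)$ control to $\int_M u_\lambda\,dV\ge -C$ is not valid.
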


We now sketch the proofs of these results using a somewhat novel approach to the integral geometry and
complex analysis.

\subsection{\label{GEOS} Background on hypersurfaces and geodesics}

The proof of the  Crofton formula given below in Lemma \ref{CROFTONEST} involves the geometry
of geodesics and hypersurfaces. To prepare for it we provide the relevant background.

As above, we  denote by $d\mu_L$  the Liouville measure on $S^*
M$. We also denote by $\omega$ the standard symplectic form on
$T^* M$ and by $\alpha$ the canonical one form. Then 
$d\mu_L = \omega^{n-1} \wedge \alpha$ on $S^* M$. Indeed, $d\mu_L$
is characterized by the formula $d\mu_L \wedge d H = \omega^{m}$,
where $H(x, \xi) = |\xi|_g$.  So it suffices to verify that
$\alpha \wedge dH = \omega$ on $S^*M$. We take the interior
product $\iota_{\Xi_H}$ with the Hamilton vector field $\Xi_H$ on
both sides, and the identity follows from the fact that
$\alpha(\Xi_H) = \sum_j \xi_j \frac{\partial H}{\partial \xi_j} =
H = 1$ on $S^*M$, since $H$ is homogeneous of degree one. Henceforth we denote
by $\Xi = \Xi_H$ the generator of the geodesic flow. 

Let $N \subset M$ be a smooth hypersurface in a Riemannian manifold $(M,
g)$.  We denote by  $T^*_N M$ the 
of covectors with footpoint on $N$ and $S^*_N M$ the unit covectors along $N$.
We   introduce  Fermi normal coordinates $(s, y_n) $
along
 $N$,  where  $s$ are coordinates on $N$ and $y_n$  is the
 normal coordinate, so that $y_m = 0$ is a local defining function for $N$.   We also let $\sigma, \xi_m$ be
 the dual symplectic Darboux coordinates. Thus the canonical
 symplectic form is $\omega_{T^* M } = ds \wedge d \sigma + dy_m
 \wedge d \xi_m. $
Let $\pi: T^* M \to M$ be the natural projection. For notational simplicity we denote
$\pi^*y_m$ by  $ y_m$ as functions on $T^* M$. Then $y_m$ is a
defining function of  $T^*_N M$.

  The hypersurface  $S^*_N M \subset S^* M$ is a kind of Poincar\'e section or
symplectic transversal to the  orbits of $G^t$, i.e. is a symplectic transversal away from 
the (at most  codimension one) set  of $(y, \eta) \in S_N^* M$  for which 
$\Xi_{y, \eta} \in T_{y, \eta} S^*_N M$, where as above $\Xi$ is the  generator 
of the geodesic flow.  More precisely,

\begin{lem} \label{NSYMP} The restriction $\omega |_{S_N^* M}$ is symplectic on $S^*_N M \backslash S^* N$. 
\end{lem}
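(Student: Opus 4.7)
The plan is to realize $S^*_N M$ as a codimension-two submanifold of $T^*M$ cut out by two independent functions, and then invoke the standard symplectic-linear-algebra fact that $\omega$ restricts to a symplectic form on such a submanifold precisely where the Poisson bracket of the two defining functions is nonzero. In the Fermi coordinates $(s, y_m, \sigma, \xi_m)$ introduced above, the two defining functions are $y_m$ (cutting out $T^*_N M$) and $H = |\xi|_g$ (cutting out $S^*M$); both differentials are nowhere zero on $T^*M \setminus 0_M$, and the dimension count $\dim S^*_N M = 2m-2$ is even, as it must be.

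At a point $p \in S^*_N M$ I would identify $T_p S^*_N M = \ker dy_m \cap \ker dH$ and observe that its $\omega$-orthogonal in $T_p(T^*M)$ is spanned by the Hamilton vector fields $\Xi_{y_m}$ and $\Xi = \Xi_H$. Hence $\omega|_{S^*_N M}$ is non-degenerate at $p$ iff this two-plane meets $T_p S^*_N M$ only in $0$, which, upon writing the $2 \times 2$ Gram matrix of symplectic pairings of $\Xi_{y_m}$ and $\Xi_H$, reduces to the single scalar condition
\[
\{y_m, H\}(p) \neq 0.
\]

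To close the argument I would unpack $\{y_m, H\} = \Xi(y_m) = dy_m(\Xi)$ as the normal component of the geodesic velocity at $p$. In Fermi coordinates the metric block-decomposes so that
\[
H = \sqrt{g^{ab}(s, y_m)\sigma_a \sigma_b + \xi_m^2},
\]
which gives $\partial H/\partial \xi_m = \xi_m/H$; restricted to $S^*_N M$ (where $H = 1$) this equals $\xi_m$. Therefore $\{y_m, H\}(p) = 0$ iff $\xi_m = 0$ at $p$, i.e.\ iff $\eta$ is tangent to $N$, i.e.\ iff $p \in S^*N$. Off $S^*N$, $\omega|_{S^*_N M}$ is a non-degenerate closed $2$-form, hence symplectic.

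I do not anticipate any real obstacle: the argument is essentially linear symplectic geometry once one recognizes $S^*_N M$ as a transverse intersection of two hypersurfaces, plus the one-line coordinate computation identifying the exceptional locus $\{\xi_m = 0\} \cap S^*_N M$ with $S^*N$. The only subtle point is confirming that the kernel of $\omega|_{T^*_N M}$ at $p$ is exactly the line spanned by $\Xi_{y_m} = -\partial/\partial \xi_m$, so that the failure of non-degeneracy of the further restriction to $S^*_N M$ is detected by a single function, namely $\partial H/\partial \xi_m = \xi_m$.
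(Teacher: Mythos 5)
Your proof is correct and is essentially the paper's own argument: both reduce non-degeneracy of $\omega|_{S^*_N M}$ to the single scalar condition $\{y_m, H\} = dy_m(\Xi) \neq 0$ and identify its vanishing locus inside $S^*_N M$ with $S^*N$. The only cosmetic difference is in packaging: you treat $S^*_N M$ as a codimension-two submanifold of $T^*M$ cut out by $y_m$ and $H$ and invoke the Poisson-bracket (Gram matrix) criterion, computing $\{y_m,H\} = \partial H/\partial \xi_m = \xi_m$ in Fermi coordinates, whereas the paper first restricts to $S^*M$, uses $\ker(\omega|_{S^*M}) = \R\,\Xi$ together with transversality of $T S^*_N M$ to $\Xi$, and identifies the exceptional set via the horizontal-lift identity $dy_m(\eta^h) = dy_m(\eta)$.
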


Indeed, $\omega |_{S_N^* M}$ is symplectic on $T_{y, \eta} S^* N$ as long as $T_{y, \eta} S^*_N M$ 
is transverse to $\Xi_{y, \eta}$, since $\ker (\omega|_{S^*M}) = \R \Xi. $ But  $S^* N$ is the set of points of $S^*_N M$ where $\Xi \in T S^*_N M$, i.e. where $S^*_N M$
fails to be transverse to $G^t$. 
Indeed, transversality fails when $\Xi(y_m) =dy_m (\Xi) = 0$, and $\ker d y_m \cap \ker d H  = T S^*_N M$.  One may also see
it in Riemannian terms as follows:  the generator $\Xi_{y, \eta}$ is the
horizontal lift $\eta^h$ of $\eta$ to $(y, \eta)$ with respect to
the Riemannian connection on $S^* M$, where we freely identify
covectors and vectors by the metric. Lack of transversality occurs
when $\eta^h $ is tangent to $T_{(y, \eta)} (S^*_N M)$. The latter
is the kernel of $d y_n$. But $d y_m (\eta^h) = d y_m (\eta)= 0 $
if and only if $\eta \in T N$. 

It follows from Lemma \ref{NSYMP} that the symplectic volume form of $S^*_N M \backslash S^* N$
is $\omega^{n-1} |_{S_N^* M}$. The following Lemma gives a useful alternative formula:

\begin{lem} \label{dmuLN} 
Define  $$d\mu_{L, N} =  \iota_{\Xi} d\mu_L \;|_{S^*_N M}, $$
where  as above,   $d\mu_L$ is Liouville measure on  $S^* M$.  Then $$d \mu_{L, N}= \omega^{m-1} |_{S_N^* M}. $$
\end{lem}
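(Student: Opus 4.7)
The proof is a direct Cartan-calculus computation using the factorization $d\mu_L = \omega^{m-1} \wedge \alpha$ on $S^* M$ already established in the discussion preceding the lemma.

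The plan is first to apply the graded Leibniz rule for the interior product $\iota_\Xi$ to $\omega^{m-1} \wedge \alpha$, obtaining
$$
\iota_\Xi(\omega^{m-1} \wedge \alpha) \;=\; (\iota_\Xi \omega^{m-1}) \wedge \alpha \;+\; \alpha(\Xi)\, \omega^{m-1},
$$
where the sign $(-1)^{2(m-1)} = 1$ has been absorbed. Then I would evaluate each of the two terms after pulling back along the inclusion $j \colon S^* M \hookrightarrow T^* M$.

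For the second term, the excerpt already records $\alpha(\Xi) = H = 1$ on $S^* M$, so this piece contributes $\omega^{m-1}|_{S^* M}$. For the first term, I would invoke the Hamilton equation $\iota_\Xi \omega = -dH$ on $T^* M$. Since $H \equiv 1$ on $S^* M$, one has $j^* dH = 0$ and hence $j^*(\iota_\Xi \omega) = 0$; expanding $\iota_\Xi \omega^{m-1} = (m-1)(\iota_\Xi \omega) \wedge \omega^{m-2}$ then shows that the first term vanishes on $S^* M$. Combining the two contributions gives $\iota_\Xi d\mu_L |_{S^* M} = \omega^{m-1}|_{S^* M}$, and a further restriction to $S^*_N M \subset S^* M$ yields $d\mu_{L,N} = \omega^{m-1}|_{S^*_N M}$, as claimed.

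I do not anticipate any real obstacle here: the only mild subtlety is to remember that $\iota_\Xi \omega = -dH$ is nonzero as a one-form on $T^* M$ and vanishes only after pullback to the energy surface, so the $\alpha$-free term really does drop out on $S^* M$. No analytic input beyond the homogeneity of $H$ and the factorization $d\mu_L = \omega^{m-1} \wedge \alpha$ enters.
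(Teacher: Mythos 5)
Your computation is correct and is exactly the argument the paper intends: the paper's proof is the one-liner "$d\mu_L = \omega^{m-1}\wedge\alpha$ and $\iota_\Xi d\mu_L = \omega^{m-1}$," which your Leibniz-rule expansion (using $\alpha(\Xi)=H=1$ on $S^*M$, established just before the lemma, and $\iota_\Xi\omega=\pm dH$ pulling back to zero on the level set $H=1$) simply makes explicit. No discrepancy in approach or substance.
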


Indeed,  $d \mu_L = \omega^{m-1} \wedge \alpha$, and $ \iota_{\Xi} d\mu_L = \omega^{m-1}$.  

\begin{cor} \label{COR} $\hcal^{m-1} (N) = \frac{1}{\beta_m} \int_{S^*_N M} |\omega^{m-1}|$. \end{cor}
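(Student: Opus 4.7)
The plan is a direct Fubini computation in Fermi normal coordinates $(s_1, \ldots, s_{m-1}, y_m)$ along $N$ with dual symplectic coordinates $(\sigma, \xi_m)$, so that $N = \{y_m = 0\}$ locally and the ambient metric is block-diagonal along $N$: $|\xi|_g^2 = g^{ij}(s,0)\sigma_i\sigma_j + \xi_m^2$ on $T^*_N M$. Since $\omega = \sum_i ds_i \wedge d\sigma_i + dy_m \wedge d\xi_m$, the restriction to $T^*_N M = \{y_m = 0\}$ annihilates the normal summand and yields $\omega|_{T^*_N M} = \pi_N^* \omega_N$, where $\omega_N$ is the canonical symplectic form on $T^*N$ and $\pi_N : T^*_N M \to T^*N$ is the projection $(s, \sigma, \xi_m) \mapsto (s, \sigma)$ whose fibers are the $\xi_m$-lines. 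Consequently $\omega^{m-1}|_{T^*_N M} = (m-1)!\, \pi_N^*\, d\mu_{T^*N}$ is basic, where $d\mu_{T^*N}$ denotes Liouville measure on $T^*N$.

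Restricting further to $S^*_N M$ cut out by $|\xi|_g^2 = 1$, the map $\pi_N$ becomes a $2$-to-$1$ branched covering of the closed unit coball bundle $B^*N = \{|\sigma|_{g_N} \le 1\} \subset T^*N$, with sheets $\xi_m = \pm\sqrt{1 - |\sigma|_{g_N}^2}$ and branch locus $S^*N$; the latter is precisely the degeneracy locus of Lemma \ref{NSYMP} and has measure zero. By a Fubini/change-of-variables argument along the fibers of $\pi_N|_{S^*_N M}$,
\begin{equation*}
\int_{S^*_N M} |\omega^{m-1}| \;=\; 2(m-1)! \int_{B^*N} d\mu_{T^*N} \;=\; 2(m-1)!\, V_{m-1}\, \hcal^{m-1}(N),
\end{equation*}
where $V_{m-1}$ is the volume of the Euclidean unit ball in $\R^{m-1}$; the last equality uses that the symplectic volume of the coball fiber $B^*_y N$ is $V_{m-1}\, dV_N(y)$ (its Lebesgue volume in the $\sigma$-chart equals $V_{m-1}\sqrt{\det g_N(y)}$, and the factor $\sqrt{\det g_N(y)}$ combines with $ds_1 \cdots ds_{m-1}$ to give the Riemannian surface element $dV_N$). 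Setting $\beta_m = 2(m-1)!\, V_{m-1}$ delivers the corollary.

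The only subtlety worth flagging is notational: one must keep in mind that $d\mu_{T^*N}$ is intrinsic Darboux/Liouville measure while the coordinates $(s_i)$ on $N$ need not be orthonormal, so the apparent coordinate dependence cancels only after combining the $\sigma$-integral with the $s$-integral as above. Everything else is immediate from Lemma \ref{dmuLN} and the dimension count; no input beyond the Fermi decomposition is required, and the result is a local Cauchy--Crofton type identity relating the $(m-1)$-dimensional Hausdorff measure of $N$ to the symplectic area of its unit cosphere bundle.
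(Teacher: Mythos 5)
Your proof is correct and is essentially the argument the paper intends: Corollary \ref{COR} is presented there as an immediate consequence of Lemma \ref{dmuLN}, the point being exactly your observation that $\omega^{m-1}|_{T^*_N M}$ is basic for the projection to $T^*N$, so that $\int_{S^*_N M}|\omega^{m-1}|$ reduces by Fubini to a fixed dimensional constant (from the two-sheeted projection of $S^*_yM\cap T^*_NM$ onto the unit coball in $T^*_yN$) times $\hcal^{m-1}(N)$. The one substantive point of divergence is the constant: your computation gives $\beta_m=2(m-1)!\,V_{m-1}$ with $V_{m-1}$ the volume of the unit ball in $\R^{m-1}$, while the parenthetical in Proposition \ref{CROFTONEST} says the unit ball in $\R^{m-2}$; your value is the correct one, as one sees from the identity $\int_{S^{m-1}}|\langle \xi,e\rangle|\,d\sigma(\xi)=2V_{m-1}$ or from the check $m=2$, where $\int_{S^*_N M}|\omega|=\int_N\int_0^{2\pi}|\sin(\alpha-\theta)|\,d\theta\,ds=4\,\hcal^{1}(N)=2\cdot 1!\cdot V_1\,\hcal^{1}(N)$. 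Since $\beta_m$ enters the argument only as a fixed dimensional constant, this off-by-one in the paper's description affects nothing downstream.
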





\subsection{Hausdorff measure and Crofton formula for real geodesic arcs}

   First we sketch a proof of the integral geometry estimate using geodesic arcs rather than local coordinate
line segments. For background on integral geometry and  Crofton type formulae we refer to \cite{AB,AP}. As explained there, a Crofton
formula arises from a double fibration

$$\begin{array}{lllll} && \ical  && \\ &&&&\\
& \pi_1 \;\swarrow & & \searrow \;\pi_2 & \\ &&&& \\ \Gamma  &&&&
B,
\end{array}$$
where $\Gamma$ parametrizes a family of submanifolds $B_{\gamma}$ of $B$. The points  $b \in B$
then parametrize a family of submanifolds $\Gamma_b = \{\gamma \in \Gamma: b \in B_{\gamma}\}$
and the top space is the incidence relation in $B \times \Gamma$ that $b \in B_{\gamma}.$

We would like to define $\Gamma$ as the space of geodesics of $(M, g)$, i.e.
the space of orbits of the geodesic flow on $S^* M$.  Heuristically,
the space of geodesics is the quotient space $S^* M/\R$ where $\R$ acts by the geodesic flow $G^t$  (i.e. the Hamiltonian flow of
 $H$). Of course,
for a general (i.e. non-Zoll) $(M, g)$ the `space of geodesics' is not a Hausdorff space and so we do not have a simple analogue
of the space of lines in $\R^n$. Instead we consider  the space $\gcal_T$ of geodesic arcs of length $T$. 
 If we only use partial orbits of length $T$, no two partial orbits are equivalent and
 the space of geodesic arcs $\gamma_{x, \xi}^T$ of length $T$  is simply
parametrized by $S^* M$.  Hence we let  $B = S^* M$ and also $\gcal_T \simeq S^* M$.   The fact that different arcs of length $T$ of the same geodesic are distinguished leads to some
redundancy.

In the following, let $L_1$ denote the length of the shortest
closed geodesic of $(M, g)$.

\begin{prop}\label{CROFTONEST}  Let $N \subset M$ be any smooth hypersurface\footnote{The same formula is true
if $N$ has a singular set $\Sigma$ with $\hcal^{m-2}(\Sigma) < \infty$}, and let $S^*_N M$ denote
the unit covers to $M$ with footpoint on $N$. Then for
$0 < T < L_1,$
$$\hcal^{m-1}(N)  = \frac{ 1}{\beta_mT}  
 \int_{S^* M}
\# \{t \in [- T, T]: G^t(x, \omega) \in S^*_N M\} d\mu_L(x,
\omega),$$
where $\beta_m $ is $2 (m-1)!$ times  the volume of the unit ball in $\R^{m-2}$.
\end{prop}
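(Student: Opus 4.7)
The plan is to interpret the right-hand side as a flow-box / coarea identity for the geodesic flow on $S^*M$, then invoke Corollary \ref{COR} to convert the resulting transversal integral into $\hcal^{m-1}(N)$. The key structural input is that $S^*_N M$ is transversal to the geodesic vector field $\Xi$ almost everywhere, so the flow unfolds an $(x,\omega)$-integral along geodesics into a $(y,\eta,t)$-integral on $S^*_N M\times [-T,T]$.

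Concretely, I would introduce the unfolding map
$$\Psi : S^*_N M \times [-T,T] \longrightarrow S^*M,\qquad \Psi(y,\eta,t) = G^{-t}(y,\eta).$$
The fiber $\Psi^{-1}(x,\omega)$ is in obvious bijection with $\{t\in[-T,T] : G^t(x,\omega)\in S^*_N M\}$, so that integrating fiber cardinalities against $d\mu_L$ on $S^*M$ equals $\int_{S^*_N M\times[-T,T]} \Psi^{*} d\mu_L$ by the general area formula. Lemma \ref{NSYMP} ensures that $\Xi$ is transverse to $S^*_N M$ off the codimension-one subset $S^*N$, which is $d\mu_{L,N}$-null. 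On the transverse locus $\Psi$ is a local diffeomorphism, and the standard flow-box decomposition of Liouville measure combined with Lemma \ref{dmuLN} yields the crucial identity
$$\Psi^{*} d\mu_L \;=\; dt \wedge d\mu_{L,N},$$
which is essentially $G^t$-invariance of $d\mu_L$ together with the formula $\iota_\Xi d\mu_L|_{S^*_N M}=d\mu_{L,N}$.

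The hypothesis $T<L_1$ now enters: since no geodesic of length at most $T$ closes, every partial orbit $\{G^t(x,\omega):|t|\le T\}$ is an embedded arc, so no accidental multiplicity in $\Psi^{-1}(x,\omega)$ is produced by the flow returning to $S^*_N M$ via a closed orbit, and the fiber count genuinely records the geometric intersection number. Applying Fubini on $S^*_N M\times[-T,T]$ then reduces the right-hand side of the proposition to a multiple of $T\int_{S^*_N M}|\omega^{m-1}|$, and Corollary \ref{COR} identifies this with $T\beta_m\hcal^{m-1}(N)$ (up to the overall normalization absorbed into $\beta_m$), yielding the desired Crofton identity.

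The main obstacle is the careful treatment of the singular set $S^*N\subset S^*_N M$ on which $\Xi$ is tangent to $S^*_N M$: one must verify it is negligible for both the measure $d\mu_{L,N}$ and the flow-box change of variables, so that the identity $\Psi^{*} d\mu_L = dt\wedge d\mu_{L,N}$ is valid after integration. A secondary bookkeeping point is tracking the constant of proportionality in Corollary \ref{COR} and the factor arising from the two sheets of $S^*_N M$ over $N$ (corresponding to the two conormal directions), which together pin down $\beta_m$. The footnote's extension to hypersurfaces $N$ whose singular set $\Sigma$ has $\hcal^{m-2}(\Sigma)<\infty$ then follows from the same argument applied to $N\setminus \Sigma$, since $\Sigma$ is automatically $\hcal^{m-1}$-negligible.
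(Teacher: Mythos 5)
Your proof is correct and is essentially the paper's argument in different notation: your unfolding map $\Psi(y,\eta,t)=G^{-t}(y,\eta)$ is (up to time reversal, which preserves Liouville measure) the paper's restricted projection $\pi_1$ on $\pi_2^{-1}(S^*_N M)\simeq S^*_N M\times[-T,T]$, your identity $\Psi^*d\mu_L=dt\wedge d\mu_{L,N}$ is Lemma \ref{phiT}, and the area/coarea step plus Corollary \ref{COR} is exactly how the paper concludes. The asides about the role of $T<L_1$ and the ``two sheets'' of $S^*_N M$ are inessential (and the latter slightly misdescribes the sphere-bundle fibers), but since the constant is fixed by Corollary \ref{COR} this does not affect the argument.
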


\begin{proof}

By Corollary \ref{COR},   the Hausdorff measure of $N$ is given by 
\begin{equation} \label{HNN}\begin{array}{lll} \hcal^{m-1}(N) & = & \frac{1}{\beta_m}
\int_{S^*_N M} |\omega^{m-1}|. \end{array} \end{equation}

 We  use  the  Lagrange (or more accurately, Legendre) immersion,
 $$\iota: S^* M \times \R \to S^*M \times S^* M, \;\; \iota(x,
 \omega, t) = (x, \omega, G^t(x, \omega)), $$
 where as above,  $G^t$ is the geodesic flow \eqref{gtdef}.
 We also let $\pi: T^* M \to M$ be the standard projection.
We restrict $\iota$ to $S^* M \times [-T, T]$ and define the
incidence relation $$ \ical_T = \{((y, \eta),  (x, \omega), t)
\subset S^*M \times S^*M \times [- T, T]:  (y, \eta) =   G^t(x,
\omega)\}, $$  which is isomorphic to $[- T, T ] \times S^*M$
under $\iota$. We  form the diagram
$$\begin{array}{lllll} && \ical_T \simeq S^* M \times [-T, T]  && \\ &&&&\\
& \pi_1 \;\swarrow & & \searrow \;\pi_2 & \\ &&&& \\ S^* M \simeq \gcal_T &&&&
S^* M,
\end{array}$$
using the two natural projections, which in the local parametrization take the form
$$\pi_1(t, x, \xi) = G^t(x, \xi), \;\;\; \pi_2(t, x, \xi) = (x, \xi). $$ As noted above, the bottom
left $S^*M$ should be thought of as the space of geodesic arcs.
The fiber $$\pi_1^{-1}(y, \eta) = \{(t, x, \xi) \in [-T, T ] \times S^* M: G^t(x, \xi)
= (y, \eta)\} \simeq \gamma_{(y, \eta)}^T$$ may be identified with  the geodesic segment through
$(y, \eta)$ and the fiber $\pi_2^{-1} (x, \omega) \simeq [- T,
T]$.

We  `restrict' the  diagram above to $S^*_N M$:  
\begin{equation} \label{DIAGRAM} \begin{array}{lllll} && \ical_T \simeq S_N^* M \times [-T, T]  && \\ &&&&\\
& \pi_1 \;\swarrow & & \searrow \;\pi_2 & \\ &&&& \\ (S^*_N M)_T &&&&
S_N^* M,
\end{array} \end{equation}
where 
$$(S^*_N M)_{T} = \pi_1 \pi_2^{-1} (S_N^* M) =  \bigcup_{|t| < T} G^t(S^*_N M).$$

 We  define the Crofton density $\phi_T$ on $S_N^* M$ corresponding to   the diagram \eqref{DIAGRAM} \cite{AP} (section 4) by 
\begin{equation} \label{CROFDEN} \phi_T = (\pi_2)_* \pi_1^*
d\mu_L. \end{equation} Since the fibers of $\pi_2$ are 1-dimensional, $\phi_T$
is a differential form of dimension $2 \dim M - 2$ on $S^*M$.  To make it smoother, we
can introduce a smooth cutoff $\chi$  to $(-1,1)$,  equal to $1$ on $(- \half, \half)$, and use
$\chi_T(t) = \chi(\frac{t}{T}). $  Then $\pi_1^* (d\mu_L \otimes
\chi_T dt)$ is a smooth density on $\ical_T$.

\begin{lem} \label{phiT} The Crofton density \eqref{CROFDEN} is given by, $\phi_T = T d\mu_{L, N} $ \end{lem}

\begin{proof}

In \eqref{DIAGRAM} we defined the map   
$\pi_1: (y, \eta, t) \in S^*_N M \times [-T,T]  \to G^t(y, \eta) \in (S^* M)_{\epsilon}$. We first claim that
$\pi_1^* d\mu_L = d\mu_{L, N} \otimes dt. $ This is essentially the same as Lemma \ref{dmuLN}. Indeed,
$d \pi_1 (\frac{\partial}{\partial t} )= \Xi$, hence  $\iota_{\frac{\partial}{\partial t}} \pi_1^* d\mu_L |_{(t, y, \eta)}
= (G^t)^* \omega^{m-1} = \omega^{m-1}   |_{T_{y, \eta} S^*_N M}$. 




 Combining Lemma \ref{phiT} with \eqref{HNN} gives
\begin{equation}  \label{HDPHIT} \int_{S^*_N M} \phi_T = \int_{\pi_2^{-1} (S^*_N M)} d\mu_L =  T\beta_m\hcal^{m-1}(N).  \end{equation}  

\end{proof}

We then relate the integral on the left side to numbers of intersections of geodesic arcs with $N$. 
The relation is given by  the co-area formula:  if $f: X \to Y$ is a
smooth map of manifolds of the same dimension and if $\Phi$ is a
smooth density on $Y$, and if $\# \{f^{-1}(y)\} < \infty$ for
every regular value $y$, then
$$ \int_X f^* \Phi = \int_Y \# \{f^{-1}(y)\}\; \Phi. $$
If we set set $X = \pi_2^{-1}(S^*_N M), \; Y = S^* M, $ and $f =
\pi_1|_{\pi_2^{-1}(S^*_N M)}$ then   the co-area formula gives,
\begin{equation} \label{COAREA} \int_{\pi_2^{-1}(S^*_N M)} \pi_1^* d\mu_L = \int_{S^* M}
\# \{t \in [- T, T]: G^t(x, \omega) \in S^*_N M\} d\mu_L(x,
\omega). \end{equation} 

Combining \eqref{HDPHIT} and \eqref{COAREA} gives the  result stated in Proposition \ref{CROFTONEST},

\begin{equation} \label{CONCLUSION}   T \beta_m \hcal^{m-1}(N) = 
 \int_{S^* M}
\# \{t \in [- T, T]: G^t(x, \omega) \in S^*_N M\} d\mu_L(x,
\omega). \end{equation} 

\end{proof}

\subsection{Proof of Lemma \ref{DF2}}

The next step is to complexify.

\begin{proof}

We complexify the Lagrange immersion $\iota$ from a line (segment)
to a strip in $\C$: Define
$$F: S_{\epsilon} \times S^*M \to M_{\C}, \;\;\; F(t + i
\tau, x, v) = \exp_x (t + i \tau) v, \;\;\; (|\tau| \leq \epsilon)
$$ By definition of the Grauert tube, $\psi$ is surjective onto
$M_{\epsilon}$.  For each $(x, v) \in S^* M$,
$$F_{x, v}(t + i \tau) = \exp_x (t + i \tau) v $$
is a holomorphic strip. Here, $S_{\epsilon} = \{t + i \tau \in \C:
|\tau| \leq \epsilon\}. $ We also denote by
 $S_{\epsilon, L} = \{t + i \tau \in \C:
|\tau| \leq \epsilon, |t| \leq L \}. $

  Since $F_{x, v}$ is a holomorphic strip,
$$F_{x, v}^*(\frac{1}{\lambda} dd^c \log |\psi_j^{\C}|^2) =
 \frac{1}{\lambda} dd^c_{t + i \tau} \log |\psi_j^{\C}|^2 (\exp_x (t + i \tau)
 v) = \frac{1}{\lambda} \sum_{t + i \tau: \psi_j^{\C}(\exp_x (t + i
 \tau) v) = 0} \delta_{t + i \tau}.
$$ Put:
\begin{equation} \label{acal} \acal_{L, \epsilon}  (\frac{1}{\lambda} dd^c \log |\psi_j^{\C}|^2) = \frac{1}{\lambda} \int_{S^* M} \int_{S_{\epsilon, L}}
dd^c_{t + i \tau} \log |\psi_j^{\C}|^2 (\exp_x (t + i \tau) v)
d\mu_L(x, v). \end{equation}   A key observation of \cite{DF,Lin} is that
\begin{equation} \label{MORE}
\#\{\ncal_{\lambda}^{\C} \cap F_{x,v}(S_{\epsilon, L}) \} \geq \#\{\ncal_{\lambda}^{\R} \cap F_{x,v}(S_{0, L}) \}, 
\end{equation}
since every real zero is a complex zero. 
It follows then from    Proposition \ref{CROFTONEST} (with $N = \ncal_{\lambda}$) that
$$\begin{array}{lll} \acal_{L, \epsilon}  (\frac{1}{\lambda} dd^c \log
|\psi_j^{\C}|^2) &= &  \frac{1}{\lambda} \int_{S^* M}
\#\{\ncal_{\lambda}^{\C} \cap F_{x,v}(S_{\epsilon, L}) \} d
\mu(x,v) \\ && \\
&\geq & \frac{1}{\lambda} \hcal^{m-1}(\ncal_{\psi_{\lambda}}).\end{array} $$ 
Hence to obtain an upper bound on $\frac{1}{\lambda} \hcal^{m-1}(\ncal_{\psi_{\lambda}})$ it suffices
to prove that there exists $M < \infty$ so that
\begin{equation} \label{acalest} \acal_{L, \epsilon}  (\frac{1}{\lambda} dd^c \log
|\psi_j^{\C}|^2)  \leq M. \end{equation}

To prove \eqref{acalest}, we observe that since  $dd^c_{t + i \tau} \log |\psi_j^{\C}|^2 (\exp_x (t + i \tau)
v)$ is a positive $(1,1)$ form on the strip, the integral over
$S_{\epsilon}$ is only increased if we integrate against a
positive smooth test function $\chi_{\epsilon} \in
C_c^{\infty}(\C)$ which equals one on $S_{\epsilon, L}$ and vanishes
off $S_{2 \epsilon, L} $. Integrating  by
parts the $dd^c$ onto $\chi_{\epsilon}$, we have 
$$\begin{array}{lll} \acal_{L, \epsilon} (\frac{1}{\lambda} dd^c \log |\psi_j^{\C}|^2) &\leq &  \frac{1}{\lambda} \int_{S^* M} \int_{\C}
dd^c_{t + i \tau} \log |\psi_j^{\C}|^2 (\exp_x (t + i \tau) v)
\chi_{\epsilon} (t + i \tau) d\mu_L(x, v) \\ && \\  &= & \frac{1}{\lambda} \int_{S^* M} \int_{\C}
 \log |\psi_j^{\C}|^2 (\exp_x (t + i \tau) v)
dd^c_{t + i \tau} \chi_{\epsilon} (t + i \tau) d\mu_L(x, v) .
\end{array}$$

Now write $\log |x| = \log_+ |x| - \log_- |x|$. Here $\log_+ |x| = \max\{0, \log |x|\}$ and
$\log_ |x|= \max\{0, - \log |x| \}. $ Then we need upper bounds for 
$$  \frac{1}{\lambda} \int_{S^* M} \int_{\C}
 \log_{\pm} |\psi_j^{\C}|^2 (\exp_x (t + i \tau) v)
dd^c_{t + i \tau} \chi_{\epsilon} (t + i \tau) d\mu_L(x, v) .$$
For $\log_+$ the upper bound is an immediate consequence of Proposition \ref{PW}.  For $\log_-$ the
bound is subtler: we need to show that $|\phi_{\lambda}(z)| $ cannot be too small on too large a set. 
As we know from Gaussian beams, it is possible that $|\phi_{\lambda}(x) | \leq C e^{- \delta \lambda} $
on sets of almost full  measure in the real domain;
we  need to show that nothing worse can happen. 

 The map \eqref{E} is a diffeomorphism and since $B_{\epsilon}^* M = \bigcup_{0 \leq \tau \leq \epsilon} S^*_{\tau} M$
we also have that 
$$E:   S_{\epsilon, L} \times S^* M  \to M_{\tau}, \;\;\; E(t + i \tau, x, v) = \exp_x (t + i \tau) v  $$
is  a diffeomorphism for each fixed $t$. Hence  by letting $t$ vary, $E$ 
is a smooth fibration with  fibers given by  geodesic arcs.  Over a point $\zeta \in M_{\tau}$ the fiber of the map is a geodesic arc
$$\{ (t + i \tau, x, v): \exp_x (t + i \tau) v = \zeta, \;\; \tau = \sqrt{\rho}(\zeta)\}. $$ Pushing forward the measure $
dd^c_{t + i \tau} \chi_{\epsilon} (t + i \tau) d\mu_L(x, v) $ under $E$ gives  a positive measure $d\mu$ on $M_{\tau}$. 
We claim that 
\begin{equation}\label{PUSH}  \mu: = E_* \; dd^c_{t + i \tau} \chi_{\epsilon} (t + i \tau) d\mu_L(x, v)  =\left (\int_{\gamma_{x, v}} 
\Delta_{t + i \tau} \chi_{\epsilon} ds \right) dV_{\omega}, \end{equation}
where $dV_{\omega}$ is the  K\"ahler volume form $\frac{\omega^m}{m!} $ (see \S \ref{AC}.)

In fact, $d\mu_{L}$ is equivalent under $E$ to the contact volume form $\alpha \wedge \omega_{\rho}^{m-1}$
where $\alpha = d^c \sqrt{\rho}$.
Hence the claim amounts to saying that the K\"ahler volume form is $d \tau$ times the contact volume form.
In particular  it is a smooth  (and  of course signed)  multiple $J$  of the K\"ahler volume form $dV_{\omega}$, and we
do not need to know the coefficient function $J$ beyond that it is bounded above and below by constants independent of
$\lambda$.
We then have
\begin{equation} \label{JEN}  \int_{S^* M} \int_{\C}
 \log |\psi_j^{\C}|^2 (\exp_x (t + i \tau) v)
dd^c_{t + i \tau} \chi_{\epsilon} (t + i \tau) d\mu_L(x, v) = \int_{M_{\tau}} 
\log |\psi_j^{\C}|^2   J d V.  \end{equation}
To complete the proof of \eqref{acalest} it suffices  to prove that the right side is $\geq - C \lambda$ for some $ C> 0$.

We   use  the well-known

\begin{lem} \label{HARTOGS}  (Hartog's Lemma;  (see
\cite[Theorem~4.1.9]{HoI-IV}): Let $\{v_j\}$ be a sequence of subharmonic functions in an
open set $X \subset \R^m$ which have a uniform upper bound on any
compact set. Then either $v_j \to -\infty$ uniformly on every
compact set, or else there exists a subsequence $v_{j_k}$ which is
convergent to some  $u \in L^1_{loc}(X)$. Further,  $\limsup_n
u_n(x) \leq u(x)$ with equality almost everywhere. For every
compact subset $K \subset X$ and every continuous function $f$,
$$\limsup_{n \to \infty} \sup_K (u_n - f) \leq \sup_K (u - f). $$
In particular, if $f \geq u$ and $\epsilon > 0$, then $u_n \leq f
+ \epsilon$ on $K$ for $n$ large enough. \end{lem}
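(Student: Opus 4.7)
My plan is to prove Hartog's Lemma by combining the sub-mean value property of subharmonic functions with a standard $L^1_{\mathrm{loc}}$-compactness argument. The statement has three qualitative parts: the dichotomy, the construction of the limit $u$ with pointwise $\limsup$ control, and the sharper $\sup_K$ estimate used in the paper; I would treat these in order. First, I would fix compacts $K \subset X$ and a slightly larger $K' \subset X$ containing $K$ in its interior, and use the hypothesis to obtain a uniform upper bound $v_j \leq M$ on $K'$. The positive parts $v_j^+ \leq M$ are uniformly bounded in $L^1(K')$, so $\{v_j\}$ is bounded in $L^1(K')$ if and only if $\int_{K'} v_j\,dx$ is bounded below, which immediately produces the claimed dichotomy.

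In the bounded-below case, the classical compactness theorem for subharmonic functions that are bounded above and in $L^1_{\mathrm{loc}}$ would extract a subsequence $v_{j_k}$ converging in $L^1_{\mathrm{loc}}(X)$ to a limit $u \in L^1_{\mathrm{loc}}$, which is subharmonic after correction on a null set. The pointwise bound $\limsup_k v_{j_k}(x) \leq u(x)$ then follows from sub-mean value combined with Fatou: for every $r > 0$, $\limsup_k v_{j_k}(x) \leq \frac{1}{|B_r|}\int_{B(x,r)} u\,dy$, and letting $r \to 0$ yields the inequality at every Lebesgue point of $u$, hence almost everywhere. Equality a.e.\ comes from the upper semicontinuity of subharmonic functions together with Lebesgue differentiation. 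In the other branch of the dichotomy, sub-mean value together with a finite covering of $K$ by balls in $K'$ converts $\int_{K'} v_j \to -\infty$ into uniform convergence $v_j \to -\infty$ on $K$.

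The concluding estimate on $\sup_K(v_n - f)$ is the subtlest part, and I expect it to be the main obstacle. To prove it I would argue by contradiction: assume there exist $\varepsilon > 0$ and, along a subsequence, points $x_n \in K$ with $v_n(x_n) - f(x_n) \geq \sup_K(u - f) + \varepsilon$; by compactness of $K$, pass to a further subsequence with $x_n \to x_0 \in K$. The delicate point is that sub-mean value is applied at the moving centers $x_n$, while $L^1_{\mathrm{loc}}$ convergence is most natural at a fixed center. The fix is to choose a radius $r > 0$ with $B(x_n, r) \subset B(x_0, 2r) \subset K'$ for large $n$, apply sub-mean value at each $x_n$ over $B(x_n, r)$, control the integral using $v_n \to u$ in $L^1(B(x_0, 2r))$, and use continuity of $f$ at $x_0$. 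This yields averages of $u$ over small balls around $x_0$ exceeding $f(x_0) + \sup_K(u - f) + \varepsilon/2$; taking $r \to 0$ and invoking upper semicontinuity of $u$ forces $u(x_0) - f(x_0) \geq \sup_K(u - f) + \varepsilon/2$, a contradiction. The final ``in particular'' clause then follows immediately.
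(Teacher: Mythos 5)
The paper offers no proof of this lemma at all: it is quoted verbatim (as ``well-known'') from H\"ormander \cite[Theorem~4.1.9]{HoI-IV}, so your attempt can only be measured against the standard textbook argument. The second and third parts of your outline are essentially that argument and are sound. In particular the $\sup_K$ estimate, which you correctly identify as the part actually used in the paper, is handled properly: sub-mean value at the moving centers $x_n$, comparison with $u$ via $L^1$ convergence on the fixed ball $B(x_0,2r)$, continuity of $f$, then $r\to 0$ and upper semicontinuity of the subharmonic representative of $u$ (with the trivial modification when $\sup_K(u-f)=-\infty$). Two small blemishes there: the pointwise bound $\limsup_k v_{j_k}(x)\le \frac{1}{|B_r|}\int_{B(x,r)}u$ needs only $L^1$ convergence, not Fatou; and ``equality almost everywhere'' does not follow from upper semicontinuity plus Lebesgue differentiation as you assert --- the quick correct route is reverse Fatou (legitimate because of the uniform upper bound $v_{j_k}\le M$) giving $\int_B u=\lim_k\int_B v_{j_k}\le \int_B\limsup_k v_{j_k}$ for every ball, which combined with $\limsup_k v_{j_k}\le u$ forces equality a.e.

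The genuine gap is in the first part, which is where the real content of the lemma lives. First, you extract the $L^1_{loc}$-convergent subsequence by citing ``the classical compactness theorem for subharmonic functions that are bounded above and in $L^1_{loc}$'' --- but that theorem \emph{is} essentially the statement being proved; a self-contained proof needs the Riesz decomposition on balls, weak-$*$ compactness of the Riesz measures $\Delta v_j$, Harnack/normal-family arguments for the harmonic parts, and $L^1_{loc}$ continuity of the Newtonian potentials, none of which appears. Second, your dichotomy ``$\int_{K'}v_j$ bounded below or not'' on a single compact $K'$ is not the stated alternative and the deductions in both branches fail as written. If the integrals are unbounded below you may only pass to a subsequence along which they tend to $-\infty$, and this does not force the full sequence to tend to $-\infty$ uniformly on compacts: take $v_{2j}\equiv 0$, $v_{2j+1}\equiv -j$, for which your branch gives the wrong alternative while the true conclusion is realized by the even subsequence, which your branching never produces. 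Conversely, in the bounded branch you need $L^1$ bounds on \emph{every} compact subset along a common subsequence, and passing from ``not uniformly $\to-\infty$'' (a pointwise lower bound at some points of one compact set) to such bounds is exactly the propagation step --- sub-mean value on chains of overlapping balls, using connectedness of $X$ (a hypothesis present in H\"ormander but dropped in the paper's transcription) --- that your ``immediately produces the claimed dichotomy'' skips. The same issue undercuts the covering argument at the end of your second paragraph: for $x\in K$, sub-mean value bounds $v_j(x)$ by an average over a ball centered at $x$, and a lower bound for $\int_{K'}v_j$ gives no upper bound for that average, since the integral over the rest of $K'$ may itself be very negative.
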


 This Lemma implies the desired lower bound on \eqref{JEN}: 
  there exists $C > 0$ so that  \begin{equation}
\label{LOGINT} \frac{1}{\lambda} \int_{M_{\tau} } \log |\psi_{\lambda}| J d V \geq - C.  \end{equation}
For 
if not, there exists a  subsequence of eigenvalues $\lambda_{j_k}$
so that $\frac{1}{\lambda_{j_k}}\int_{M_{\tau}} \log |\psi_{\lambda_{j_k}}| J d V \to - \infty. $  By Proposition \ref{PW}, $\{\frac{1}{\lambda_{j_k}} \log |\psi_{\lambda_{j_k}}|\}$   has a uniform upper bound.
Moreover  the sequence does not tend uniformly to $-\infty$  since  $||\psi_{\lambda}||_{L^2(M)} = 1$. 
It follows that a further subsequence tends in $L^1$ to a
limit $u$ and by the dominated convergence theorem the limit of \eqref{LOGINT} along the sequence
equals $\int_{M_{\tau}} u J dV \not= - \infty.$ This contradiction concludes the proof of \eqref{LOGINT}, hence
 \eqref{acalest}, and thus 
 the theorem.


\end{proof}

\end{document}